\theoremstyle{plain}
\newtheorem*{theorem}{Theorem}
\newtheorem*{lemma}{Lemma}
\newtheorem*{definition}{Definition}
\newtheorem*{factoid}{Fact}
\def\fqn{\mathbb F_q^n}
\def\ztwo3{\mathbb Z_2^3}
\def\C{{\mathcal C}}
\def\M{{\mathcal M}}
\def\Cal{\mathcal}
\begin{document}

\title[Admissibility for the X-ray transform over $\mathbb Z_2$]{The admissibility theorem for the spatial X-ray transform over the two element field}

\author{ Eric L. Grinberg}
\address{University of Massachusetts Boston }
\email{eric.grinberg@umb.edu }

\subjclass[2000]{Primary: {44A12}}

\keywords{Integral Geometry, Radon transform, Bolker condition, admissibility, double fibrations, spreads, Cavalieri conditions, finite fields}

\begin{abstract}
We consider the Radon transform along lines in an $n$ dimensional vector space over the two element field. It is well known that this transform is injective and highly overdetermined. We classify the minimal collections of lines for which the restricted Radon transform is also injective. This is an instance of I.M.~Gelfand's {\it admissibility problem}. The solution is in stark contrast to the more uniform cases of the affine hyperplane transform and the projective line transform, which are addressed in other papers, \cite{Feld-G,Gr1}.
The presentation here is intended to be widely accessible, requiring minimum background.
\end{abstract}

\maketitle

\section{Introduction}
\label{sec:1}

\subsection{Dedication and two Mathematical Moments} 
\bigskip

This paper is dedicated to the memory of Leon Ehrenpreis. His colleagues were fortunate to have countlessly many discussions with him, after seminars (and during), in offices, hallways, and at the lounge blackboard. These served to inspire, energize and generate many new ideas. The subject of this essay may well have come up in one of these chats. 

During graduate school I learned about the role of {\it spreads} in integral geometry from Ethan Bolker, via an early, handwritten version of \cite{Bol1}  and when I joined Temple University the concept of linear spreads followed and came up in early conversations with Leon. He found spreads to be useful in his approach to integral geometry and he formulated a non-linear variant which he employed in framing his notion of  the {\it non-linear Radon transform}. See the major work \cite{ Ehren} and the review \cite{Ber} by Carlos Berenstein. 

I recall vividly a two-panel chalk board with the level sets of a homogeneous polynomial drawn on one panel, and the heat equation displayed on the other. I also recall sharing a car ride 
with Ethan and Leon, from San Francisco to Arcata, CA, on the way to the 1989 AMS summer conference on integral geometry and tomography, which led to \cite{Gr2-Quinto}. It is safe to say that the majority of the travel time was devoted to an intensive discussion of Radon transforms (and I hope that this did not impair the safety of the ride). The beautiful Calfornia coastline was superceded by admissible line complexes.

\medskip

The structure of spreads (discussed concretely below) is particularly simple in the case of the hyperplane Radon transform over finite fields, and this  can be used to solve the admissibility problem in that context. In contrast, the structure of spreads is not as simple for transforms that integrate over planes of larger codimension, and thus we expect the admissibity problem to have a more complicated solution. Here we investigate the simplest higher-codimension case.

\section{The Radon Transform in a finite geometry.}


The theme of integral geometry, in the style introduced by P.~Funk and J.~Radon and prominent in the work of Leon Ehrenpreis, involves the recovery of functions (or data) from integrals. In applications one might imagine recovering the density distribution of biological tissue from x-ray data. If ``all'' integrals (x-ray) measurements are available then the problem is overdetermined. It is natural to look for minimal sets of data (x-rays) with which complete recovery is still possible (even though in applications such minimal measurements may present stability problems).  Finding and classifying such minimal families is an instance of I.M.~Gelfand's {\it admissibility} problem \cite{Gel2}, which initially occured in the context of the Plancherel formula for semi-simple Lie groups. In the continuous category, the problem depends in part on the choice of function spaces, mapping properties of integral operators, and smoothness properties of collections of lines.  Here we focus on a finite model of integral geometry in which analytic considerations are removed and sets of lines take center stage. In the admissibility theory work of Gelfand and collaborators within the continuous category ($\mathbb R^3$ or $\mathbb C^3$ and their projective and higher dimensional analogs) the family of lines meeting a curve (the Chow variety) and the family of lines tangent to a surface occur as admissible complexes \cite{Gel1, Kir} . Here we will search for finite analogs of these. For discussions of Radon transforms in finite geometries see, e.g., \cite{Kung1, Strich}
Recent results on admissibility in the context of finite projective spaces may be found in \cite{Feld-G}.

Starting with the $q$-element field $\mathbb F_q$ one can build lines, planes, vector spaces of dimension $n$, projective spaces, Grassmannians, and more.  It is easy to use counting measure to define the Radon transform taking functions on $\mathbb F_q^n$ to functions on the set of $k$-planes in $\mathbb F_q^n$: 
$$
R_kf(H) = \sum_{\{x \in H\}} f(x),
$$
where $H$ is a $k$ dimensional affine plane in $\mathbb F_q^n$.
Informally we write
$$
R_k : \, \{ \textrm{point functions} \} \longrightarrow \{ \textrm{k plane  functions} \}
$$
It is natural to  ask: {\it  is the transform $R_k$ invertible?} Rather than answer the question in
this specific case, we consider a more general context, informally borrowing from S.S.~Chern's (1942) formulation of integral geometry \cite{Chern}. 
Consider the following {\it double fibration diagram}:

$$
\begin{xy}
\begin{large}
\xymatrix{
     & Z \ar[dl]_{\pi}\ar[dr]^{\rho} & \\
      X  & & Y}
\end{large}
\end{xy} \qquad\qquad\qquad
$$

\bigskip

\vskip 0.5truein

Chern's formulation was presented in the continuous category; here $X,Y$ and $Z$ are finite sets. We think of $X$ as our space of points, $Y$ as the family of lines or, more generally, submanifolds of $X$, and we think of $Z$ as the {\it incidence manifold} of point-line (or point-generalized line) pairs, so that the point belongs to the line:  
$$
\{ (x,y) \vert x \in y \}.
$$
 The maps
 $\pi$ and $\rho$ are projection functions, e.g., $\pi (x,y)=x$,  so that 
$\pi \times \rho$ is one to one. Thus thinking of $X$ as a set of points and $Y$
as a family of subsets of $X$ is manifested by \cite{Guil-Stern}:
$$
F_y = \pi \circ \rho^{-1} \{ y \}.
$$
When $y$ is a line, $F_y$ is the set of points on the line.
Dually, for every point $x$ we have the set of all subsets $y$  passing
through $x$:

$$
G_x = \rho \circ \pi^{-1} \{ x \}.
$$
With the definitions of $F_y$ and $G_x$ it is possible to relax the condition that $\pi , \rho$ be projections
and consider more general diagrams, though we will not need these here. The double fibration diagram has been
used extensively as a paradigm  for Radon transforms and their generalizations by Gelfand and collaborators, S.~Helgason, 
V.~Guillemin \& S.~Sternberg, and many many others.

A double fibration diagram together with a choice of measures leads to an integral transform. 
In the finite category we will use counting measure and define the notion of Radon transform without
making any additional choices.

Let $C(X),C(Y)$ denote ($\mathbb R$ or $\mathbb C$ valued) functions on set $X,Y$, respectively, and let $f(x), g(y)$ be functions 
in the appropriate spaces; then we define the Radon transform from point functions to line functions by ``integrating" over points
in a line and the dual Radon transform by reversing the role of points and lines:
$$
R: C(X) \longrightarrow C(Y) ; \qquad
R^t : C(Y) \longrightarrow C(X),
$$
$$
Rf(y) \equiv \sum_{\{x \vert x \in y \}} f(x), \qquad
R^tg(x) \equiv \sum_{\{y \vert x \in y\}} g(y).
$$
With $X,Y,Z$ and the double fibration diagram so general can anything be said about invertibility of the induced Radon transform? Surprisingly, 
the answer is affirmative: 

\begin{theorem}[\rm Bolker \cite{Bol1}]  Assume that the double fibration diagram satisfies
the following two conditions:
\begin{itemize}
\item $\# G_x = \alpha$, $\forall x \in X$ \qquad \quad \, (uniform count of lines through each point)
\item $\# G_{x_1} \cap G_{x_2} = \beta$
 $\forall$   $x_1 \ne x_2$ (uniform count of lines through each point pair),
\end{itemize}
for constants $\alpha , \beta$, with  $0 \ne \alpha \ne  \beta$.
Then the Radon transform associated with the diagram is invertible, with an explicit
inversion formula.
\end{theorem}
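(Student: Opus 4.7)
The plan is to compute the composition $R^t R \colon C(X) \to C(X)$ and show, using the two Bolker conditions, that it is a scalar multiple of the identity plus a rank-one correction; then to invert it by an elementary ansatz and read off $f = (R^t R)^{-1} R^t (Rf)$.

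First I would expand, swapping the order of summation:
\[
R^t R f(x) \;=\; \sum_{y \,:\, x \in y}\;\sum_{x' \in y} f(x') \;=\; \sum_{x' \in X} \#\bigl(G_x \cap G_{x'}\bigr)\, f(x'),
\]
since the number of $y$ passing through both $x$ and $x'$ is exactly $\#(G_x \cap G_{x'})$. The first Bolker condition gives the diagonal contribution $\#(G_x \cap G_x) = \#G_x = \alpha$, while the second gives $\#(G_x \cap G_{x'}) = \beta$ for all $x' \ne x$. Splitting off the $x' = x$ term therefore yields
\[
R^t R f(x) \;=\; (\alpha - \beta)\, f(x) \;+\; \beta \sum_{x' \in X} f(x'),
\]
which can be written as $R^t R = (\alpha - \beta)\, I + \beta\, J$, where $J$ is the rank-one operator sending $f$ to the constant function whose value is $\sum_{x' \in X} f(x')$.

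Now I would invert $(\alpha - \beta) I + \beta J$. Since $J^2 = N\, J$ with $N := \#X$, looking for an inverse of the form $a I + b J$ and matching coefficients gives
\[
(R^t R)^{-1} \;=\; \frac{1}{\alpha - \beta}\, I \;-\; \frac{\beta}{(\alpha - \beta)\bigl(\alpha + (N-1)\beta\bigr)}\, J,
\]
from which the explicit inversion formula $f = (R^t R)^{-1} R^t(Rf)$ follows. The hypothesis $\alpha \ne \beta$ is exactly what makes the first denominator nonzero; the second denominator $\alpha + (N-1)\beta$ is automatically positive since $\alpha$ and $\beta$ are nonnegative integers with $\alpha > 0$.

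There is no real obstacle in this argument: the only delicate step is the interchange of summations that converts a double sum over incident lines into a single sum over pairs of points, at which juncture both Bolker hypotheses enter on cue. After that, the proof is linear algebra of a rank-one perturbation of a scalar operator, and the inversion formula is forced on us.
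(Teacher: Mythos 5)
Your argument is correct and is essentially the paper's own proof: both compute $R^t R$ in the delta-function basis, identify it as $(\alpha-\beta)\mathbf{I} + \beta\mathbf{1}$ using the two Bolker conditions, and invert by the ansatz $a\mathbf{I}+b\mathbf{1}$, with $\alpha\neq\beta$ and $\alpha+(N-1)\beta>0$ guaranteeing invertibility. Your write-up just makes the eigenvalue bookkeeping slightly more explicit than the paper's one-line product identity.
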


\noindent
The conditions above, bundled together,  are now known as the {\it Bolker Condition}, which is used extensively.
The proof of the Theorem is straightforward. 

\begin{proof}
We first construct a basis for $C(X)$.
Let $\delta_p$ be the function on $X$ with value $1$ at $p \in X$ and $0$ elsewhere. Let $n$ be the cardinality of $X$.
Then $\{ \delta_x \}_{x \in X}$ is a basis for $C(X)$, which has dimension $n$. There is a similar 
basis for $C(Y)$. The matrix of the composed transform $R^t \circ R$ in 
this basis is:
$$
\left(
\begin{array}{ccc}
\alpha & \cdots & \beta \\
\vdots & \ddots & \vdots \\
\beta  & \cdots & \alpha \\
\end{array}
\right)
= ( \alpha - \beta ) {\mathbf  I} + \beta {\mathbf  1},
$$
where $\mathbf I , \mathbf 1$ denote the $n \times n$ identity matrix, 
and the $n \times n$ matrix with $1$'s in every entry, resp. To invert, 
note that
$
\displaystyle
(a{\mathbf  I} +  b{\mathbf  1}) 
\cdot 
(c{\mathbf  I} +  d{\mathbf  1}) 
= (ac) {\mathbf  I} +  (ad+bc+nbd){\mathbf  1}.
$
 \end{proof}
The Bolker condition is satisfied by many geometric
double fibrations, but does not hold for many others,
even when the Radon transform is injective.
The Radon transform on a triangle, rectangle, and 
pentagon can be represented by the following  matrics: 

\begin{figure}[h]
\centerline{
\includegraphics[scale=0.8]{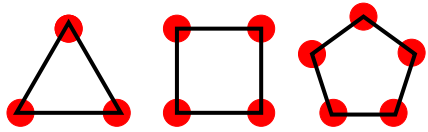}}
\caption{\small Some finite geometries with and without the Bolker condition,
and corresponding matrices}
\end{figure}

\begin{displaymath}
\qquad \quad
\left(
\begin{array}{ccc}
1 & 1 & 0 \\
0 & 1 & 1 \\
1 & 0 & 1
\end{array}
\right)
\left(
\begin{array}{cccc}
1 & 1 & 0 & 0 \\
0 & 1 & 1 & 0 \\
0 & 0 & 1 & 1 \\
1 & 0 & 0 & 1 \\
\end{array}
\right)
\left(
\begin{array}{ccccc}
1 & 1 & 0 & 0 & 0 \\
0 & 1 & 1 & 0 & 0 \\
0 & 0 & 1 & 1 & 0 \\
0 & 0 & 0 & 1 & 1 \\
1 & 0 & 0 & 0 & 1 \\
\end{array}
\right)
\end{displaymath}

\bigskip

It is easy to verify the properties in the table below for the Radon transform on these geometries.

\bigskip

\centerline{
\begin{tabular}{|c|c|c|}
\hline
  $\#$ sides & Bolker C. Satisfied?& R injective?  \\
\hline
         3 & Yes & Yes  \\
\hline
         4 & No & No \\
\hline
         5 & No & Yes \\
\hline
\end{tabular}
}
\bigskip

The $k$-plane transform in $\mathbb F_q^n$ satisfies 
the Bolker Condition, since given points $x_1,x_2$ there is an affine map $T$
that carries $x_1$ to $x_2$ and the set of lines through $x_1$ to the
set of lines through $x_2$, and given two pairs of points, there is an affine map that
carries one pair to the other and the lines through one pair to the lines through the other pair.
More generally, the Bolker Condition holds whenever there is a doubly transitive group action
that preserves the appropriate incidence relations. When group symmetry is available it is natural
consider the use of group representations. Interestingly, representation theory can be used to understand Radon transforms on the one hand, e.g., \cite{Zel}, and Radon transforms can be used to understand representation theory, e.g., \cite{ Guil-Stern, Stern}.

We may also inquire about a range characterization: when is a function of $k$-planes the Radon transform of a function
of points? We first look at the hyperplane case, $k=n-1$.
\begin{definition} A {\it spread} of hyperplanes in $\mathbb F_q^n$
is a presentation of $\fqn$ as a disjoint union of hyperplanes.
\end{definition}


\begin{factoid}
 A function $g(H)$ of hyperplanes $H$ in $\fqn$ is
the Radon transform of a function of points $x\in \fqn$ {\bf only if}
the average of $g(H)$ over any spread is the same as the average 
over any other spread:
$$
\sum_{ \{ H \in \Omega_1 \} } g(H) 
= 
 \sum_{\{ H \in \Omega_2\}} g(H)
\eqno{\textrm{ (for any two spreads $\Omega_1, \Omega_2$). }}
$$
\end{factoid}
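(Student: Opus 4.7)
The plan is to reduce the claimed identity to the defining property of a spread, namely that it partitions $\mathbb F_q^n$. Suppose $g = Rf$, so that $g(H) = \sum_{x \in H} f(x)$ for every hyperplane $H$. I will compute $\sum_{H \in \Omega} g(H)$ for an arbitrary spread $\Omega$ and show that the answer depends only on $f$, not on $\Omega$.

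The key step is to swap the order of summation:
$$
\sum_{H \in \Omega} g(H) \;=\; \sum_{H \in \Omega} \sum_{x \in H} f(x) \;=\; \sum_{x \in \mathbb F_q^n}\;\Bigl(\sum_{\substack{H \in \Omega \\ x \in H}} 1 \Bigr)\, f(x).
$$
Because $\Omega$ is a spread, the hyperplanes in $\Omega$ are pairwise disjoint and cover every point of $\mathbb F_q^n$ exactly once. Thus for each point $x$ the inner multiplicity equals $1$, and the double sum collapses to $\sum_{x \in \mathbb F_q^n} f(x)$. This quantity is manifestly independent of the choice of spread.

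Applying this identity to two spreads $\Omega_1$ and $\Omega_2$ gives
$$
\sum_{H \in \Omega_1} g(H) \;=\; \sum_{x \in \mathbb F_q^n} f(x) \;=\; \sum_{H \in \Omega_2} g(H),
$$
which is exactly the asserted Cavalieri-type equality. There is essentially no obstacle: the argument is a one-line Fubini computation, and the entire content lies in the partition property built into the definition of a spread. The real substance of the statement is not the proof but the observation that it yields a nontrivial family of necessary conditions for membership in the range of $R$, indexed by pairs of spreads.
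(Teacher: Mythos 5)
Your argument is correct and is exactly the counting argument the paper has in mind: since a spread partitions $\mathbb F_q^n$, interchanging the order of summation shows the sum of $Rf$ over any spread equals the total mass $\sum_{x}f(x)$, which is spread-independent. The paper states this Fact without writing out the details, but your Fubini computation is the intended proof.
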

These are called the {\it Cavalieri conditions}.
By way of illustration, In the diagram below, they state that the sum over lines with positive slope equals the sum over lines with negative slope.
\begin{figure}[h]
\centerline{
\includegraphics[scale=0.62]{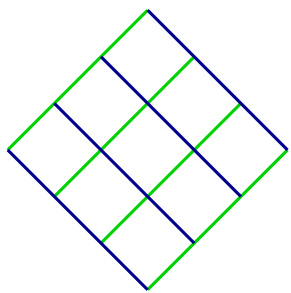}}
\caption{Two spreads leading to a Cavalieri condition}
\end{figure}

\begin{theorem}[\rm Bolker] The Cavalieri conditions characterize
the range of the hyperplane Radon transform over a finite field.
\end{theorem}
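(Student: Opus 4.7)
The necessity direction is exactly the content of the Fact stated above, so the plan is to prove sufficiency by a dimension count: I will show that the Cavalieri subspace $V \subset C(Y)$ (the $g$'s satisfying the Cavalieri conditions) has the same dimension as $\mathrm{Range}(R) \subset V$, forcing equality. Throughout, $X = \mathbb F_q^n$ and $Y$ is the set of affine hyperplanes of $\mathbb F_q^n$.

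First I would organize spreads. Two affine hyperplanes in $\mathbb F_q^n$ are disjoint iff they are parallel, so every spread consists of the $q$ cosets of a single linear hyperplane through the origin. Thus spreads are in bijection with linear hyperplane directions, i.e., with points of $\mathbb P^{n-1}(\mathbb F_q)$, giving $s := \frac{q^n-1}{q-1}$ spreads. Each spread has exactly $q$ hyperplanes, and distinct spreads are \emph{disjoint} subsets of $Y$ (different directions give different hyperplanes). Counting: $|Y| = q\cdot s$.

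Next I would compute $\dim V$. For each spread $\Omega_i$ consider the functional $L_i : g \mapsto \sum_{H \in \Omega_i} g(H)$ on $C(Y)$. As a vector in the dual, $L_i$ is the indicator function $\mathbf 1_{\Omega_i}$, and since the $\Omega_i$ are pairwise disjoint, the $s$ functionals $L_1,\ldots,L_s$ are linearly independent. The Cavalieri conditions amount to $L_1 = L_2 = \cdots = L_s$ on $g$, which is a system of $s-1$ independent linear constraints. Hence
\[
\dim V \; = \; |Y| - (s-1) \; = \; q\cdot \tfrac{q^n-1}{q-1} - \tfrac{q^n-1}{q-1} + 1 \; = \; (q-1)\cdot \tfrac{q^n-1}{q-1} + 1 \; = \; q^n.
\]

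Finally I would invoke Bolker's theorem (already established in the excerpt): the hyperplane transform over $\mathbb F_q^n$ satisfies the Bolker Condition (by the doubly transitive action of the affine group on point pairs), so $R$ is injective and $\dim \mathrm{Range}(R) = \dim C(X) = q^n$. Combined with $\mathrm{Range}(R) \subseteq V$ (the Fact), and $\dim V = q^n$, we conclude $\mathrm{Range}(R) = V$, which is exactly the range characterization. The only step that requires any thought is the disjointness observation that reduces the Cavalieri conditions to $s-1$ independent linear constraints; the rest is bookkeeping in the counts $|Y|$, $s$, and $q^n$.
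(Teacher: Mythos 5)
Your proof is correct, and it is essentially the paper's approach: the paper offers no details beyond saying the proof ``is based on a counting argument,'' and your dimension count (spreads partition the set of hyperplanes, so the Cavalieri conditions impose $s-1$ independent constraints, leaving a subspace of dimension $q^n$ that must equal the range by injectivity via Bolker's theorem) is exactly such a counting argument, carried out correctly.
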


\noindent
The proof  is based on a counting argument.
This range condition yields an {\it admissibility} theorem.

\section{Admissible Complexes}

\begin{definition}
Recall that a {\bf complex} of hyperplanes $\C$ is a collection
of hyperplanes $\{ H \vert H \in \C \}$ so that $\# \C = \# \fqn =q^n$
(there are as many hyperplanes as points). We'll also use ``complex" to
denote the appropriate number of lines, curves, etc.
\end{definition}
\begin{definition}
The complex $\C$ is said 
to be {\bf admissible} if the Radon transform operation, restricted to
planes belonging to $\C$ is still injective:
$$
R_\C : C(\fqn ) \longrightarrow C(\C ).
$$
\end{definition}

\begin{theorem}{( \cite{Gr1})} A complex $\C$ of hyperplanes in $\fqn$ is
admissible  if and only if it omits precisely one plane from each spread,
except for one spread, which belongs to $\C$ in its entirety.
\end{theorem}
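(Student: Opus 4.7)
The plan is to combine the Cavalieri range characterization with a simple count of the hyperplanes omitted from $\C$. Because Bolker's theorem tells us that $R$ is injective and $\mathrm{Image}(R)$ equals the space of hyperplane-functions satisfying the Cavalieri conditions, the restricted transform $R_\C$ fails to be injective precisely when there is a nonzero hyperplane-function $g$ supported on $Y\setminus\C$ (that is, vanishing on $\C$) whose spread-sums are all equal to a common constant. Thus the admissibility of $\C$ is equivalent to the assertion that the only such $g$ is identically zero.

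First I would set up a count. Spreads partition the full set $Y$ of $q(q^n-1)/(q-1)$ affine hyperplanes into $s:=(q^n-1)/(q-1)$ parallel classes of $q$ hyperplanes each. Writing $\omega_\Omega = |\Omega\setminus\C|$ and $t = \#\{\Omega:\omega_\Omega=0\}$ for the number of spreads fully contained in $\C$, the requirement $|\C|=q^n$ immediately yields
$$
\sum_\Omega \omega_\Omega \;=\; |Y|-|\C| \;=\; \frac{q(q^n-1)}{q-1}-q^n \;=\; s-1.
$$
Since the $s-t$ spreads meeting $Y\setminus\C$ contribute at least $s-t$ to this sum, I obtain $t\geq 1$: every complex of hyperplanes must contain at least one spread in its entirety.

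Next I would analyze the Cavalieri system with this extra information in hand. Because some full spread $\Omega_0\subseteq\C$ forces the common Cavalieri value to vanish, the conditions on $g$ reduce to the homogeneous system $\sum_{H\in\Omega\setminus\C}g(H)=0$, one equation per spread, in the $m=s-1$ unknowns indexed by $Y\setminus\C$. If $t=1$, then $\sum\omega_\Omega=s-1$ forces $\omega_\Omega=1$ for every $\Omega\neq\Omega_0$; each equation then reads $g(H_\Omega)=0$ for the unique omitted hyperplane of that spread, and since those $H_\Omega$ exhaust $Y\setminus\C$ we get $g\equiv 0$, so $\C$ is admissible. If instead $t\geq 2$, the $s-1$ units of the sum must be distributed among at most $s-2$ positive summands, so some spread $\Omega^*$ has $\omega_{\Omega^*}\geq 2$; choosing distinct $H_1,H_2\in\Omega^*\setminus\C$ and setting $g(H_1)=1$, $g(H_2)=-1$, $g=0$ elsewhere produces a nonzero Cavalieri-feasible function vanishing on $\C$, so $\C$ is not admissible.

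The main obstacle I anticipate is the bookkeeping around the two combinatorial identities $\sum\omega_\Omega=s-1$ and $t\geq 1$: once these are in hand the Cavalieri equations decouple spread-by-spread and the dichotomy $t=1$ vs.\ $t\geq 2$ maps exactly onto the theorem's admissible/non-admissible split, without any genuine linear-algebra work beyond bookkeeping in the parallel-class partition of $Y$.
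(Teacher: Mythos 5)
Your argument is correct, and its combinatorial core is the same as the paper's: the count $\sum_\Omega \omega_\Omega = s-1$ forcing at least one full spread inside $\C$, and, when exactly one spread is full, forcing exactly one omission per remaining spread, is precisely the ``counting argument'' the paper invokes. Where you genuinely differ is in how the two implications are closed. For ``if'', the paper reconstructs $Rf$ directly: the full spread in $\C$ encodes the total mass of $f$, the value of $Rf$ on the single omitted plane of any other spread is recovered by subtraction, and then the unrestricted inversion (Bolker's first theorem) finishes; your kernel formulation --- each deficient spread's Cavalieri equation collapses to $g(H_\Omega)=0$ --- is the homogeneous dual of the same mechanism and needs only the easy necessity half of the Cavalieri conditions. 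For ``only if'', however, the paper stays on the point side: it builds the ``capacitor'' distribution, $+1$ on one omitted hyperplane of a doubly deficient spread, $-1$ on a parallel omitted one, and notes that every other hyperplane meets the two charged planes in equally many ($q^{n-2}$) points, so its transform is supported on the two omitted planes; this is explicit and self-contained. You instead place the $\pm 1$ function on the hyperplane side and appeal to the sufficiency half of the range characterization (the second Bolker theorem, which the paper states but does not prove) to pull it back to a nonzero point function annihilated by $R_\C$. The trade-off is clear: your route is uniform and tidy --- admissibility becomes a statement about Cavalieri-feasible functions vanishing on $\C$, and the dichotomy $t=1$ versus $t\ge 2$ settles it --- but it is only as self-contained as the range theorem you cite; the paper's capacitor is in effect an explicit preimage of your $g$ (its transform is $q^{n-1}$ times your $\pm 1$ function), which is why the paper never needs the sufficiency direction at all.
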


\noindent
To prove ``if", it suffices to show that $R_\C f$  determines $Rf$. 
A counting argument shows that every complex contains an entire spread.
To evaluate $Rf$ on a plane $H$ which $\C$ omits, simply use the 
total mass of $f$ encoded in a spread that belongs to $\C$ in
its entirety.  To prove ``only if", take two parallel hyperplanes 
and construct a ``capacitor" charge distribution: $+1$ on plane,
$-1$ on the other, and zero elsewhere. Only the two chosen planes
can ``see" this distribution via the Radon transform. The rest have 
vanishing Radon transform because of cancellation.

\noindent
Thus the hyperplane case turns out to be the easy case. We now explore 
the next simplest: the line transform in $\mathbb Z_2^3$.
The three dimensional vector space over $\mathbb Z_2$ has
 $8$ points,
 $7$ lines through a given point, $28$ lines in all.

\begin{figure}[h!]
\centerline{
\includegraphics[scale=0.99]{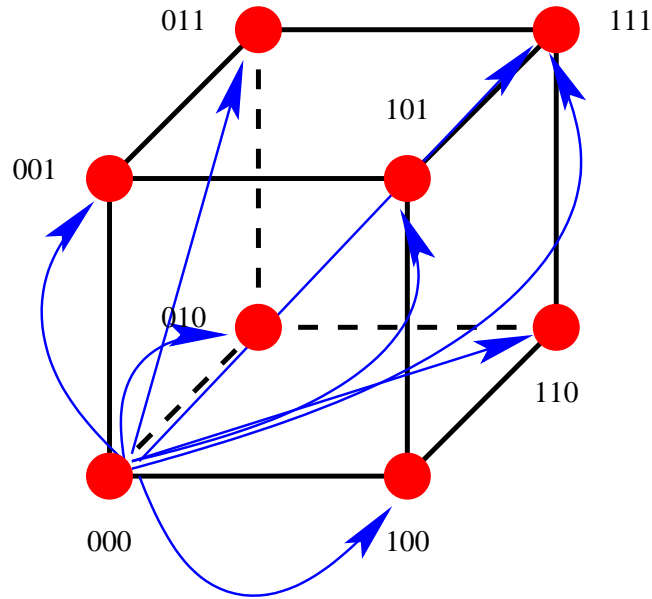}}
\caption{lines in $\mathbb Z_2^3$}
\end{figure}

\newpage

Here are some ways to construct admissible complexes:

\begin{itemize}

\item Write $\ztwo3$ as a union of two parallel planes (a {\it spread} of planes)
 and choose an admissible set of lines on each plane (four lines chosen in each
 plane). 

\bigskip

\item Choose one plane $\P \subset \ztwo3$, choose an admissible set of (four)
 lines within $\P$, then extend four {\it ``legs"} perpendicular to $\P$. 

\item Construct, if possible, admissible complexes in $\ztwo3$ without using
 planar relatively admissible complexes.
\end{itemize}

\noindent 
The first two methods are illustrated below.

\bigskip\bigskip

\begin{figure}[h!]
\centerline{
\includegraphics[scale=0.50]{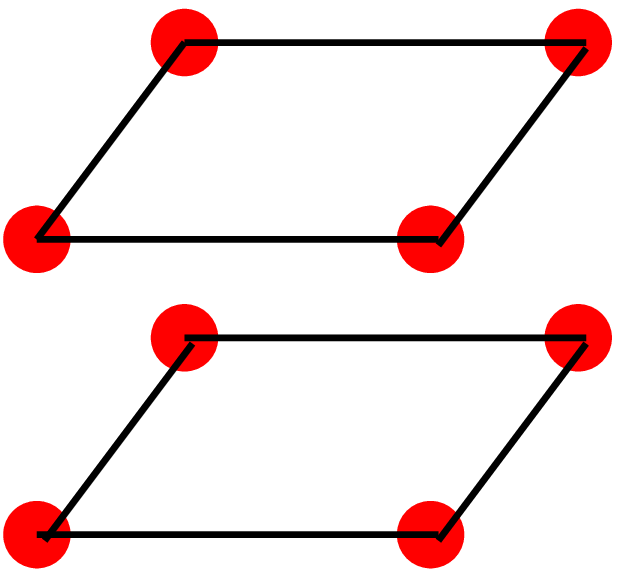} 
\qquad \qquad \qquad \qquad 
\includegraphics[scale=0.50]{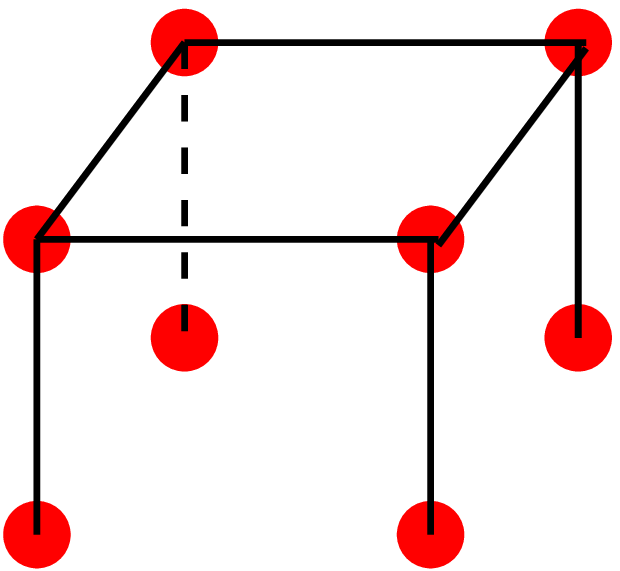}
}
\caption{Some ways to construct admissible complexes.}
\end{figure}

\bigskip 

\par\vfill\eject

\noindent
The Radon transform for lines in $\mathbb Z_2^3$ can be 
represented by the following $28 \times 8$ matrix:

\begin{displaymath}
\left(
\begin{array}{cccccccc}
1 & 1 & 0 & 0 & 0 & 0 & 0 & 0 \\
1 & 0 & 1 & 0 & 0 & 0 & 0 & 0 \\
1 & 0 & 0 & 1 & 0 & 0 & 0 & 0 \\
1 & 0 & 0 & 0 & 1 & 0 & 0 & 0 \\
1 & 0 & 0 & 0 & 0 & 1 & 0 & 0 \\
1 & 0 & 0 & 0 & 0 & 0 & 1 & 0 \\
1 & 0 & 0 & 0 & 0 & 0 & 0 & 1 \\
0 & 1 & 1 & 0 & 0 & 0 & 0 & 0 \\
0 & 1 & 0 & 1 & 0 & 0 & 0 & 0 \\
0 & 1 & 0 & 0 & 1 & 0 & 0 & 0 \\
0 & 1 & 0 & 0 & 0 & 1 & 0 & 0 \\
0 & 1 & 0 & 0 & 0 & 0 & 1 & 0 \\
0 & 1 & 0 & 0 & 0 & 0 & 0 & 1 \\
0 & 0 & 1 & 1 & 0 & 0 & 0 & 0 \\
0 & 0 & 1 & 0 & 0 & 0 & 0 & 0 \\
0 & 0 & 1 & 0 & 1 & 0 & 0 & 0 \\
0 & 0 & 1 & 0 & 0 & 1 & 0 & 0 \\
0 & 0 & 1 & 0 & 0 & 0 & 1 & 0 \\
0 & 0 & 1 & 0 & 0 & 0 & 0 & 1 \\
0 & 0 & 0 & 1 & 1 & 0 & 0 & 0 \\
0 & 0 & 0 & 1 & 0 & 1 & 0 & 0 \\
0 & 0 & 0 & 1 & 0 & 0 & 1 & 0 \\
0 & 0 & 0 & 1 & 0 & 0 & 0 & 1 \\
0 & 0 & 0 & 0 & 1 & 1 & 0 & 0 \\
0 & 0 & 0 & 0 & 1 & 0 & 1 & 0 \\
0 & 0 & 0 & 0 & 1 & 0 & 0 & 1 \\
0 & 0 & 0 & 0 & 0 & 1 & 1 & 0 \\
0 & 0 & 0 & 0 & 0 & 1 & 0 & 1 \\
0 & 0 & 0 & 0 & 0 & 0 & 1 & 1 \\
\end{array}
\right)
\end{displaymath}

\medskip

\noindent
The admissibility problems asks: 

\medskip

\centerline{\it
What are the non-singular $8 \times 8$ minors of this matrix?}

\medskip

\noindent
We'd like an answer that is geometrically motivated. The linear algebra computer environment {\tt Octave} can be used to locate all admissible complexes for this transform, as illustrated below.


\newpage

\centerline{
%
\scalebox{0.9}{
\frame{\includegraphics[width=0.8\textwidth]
{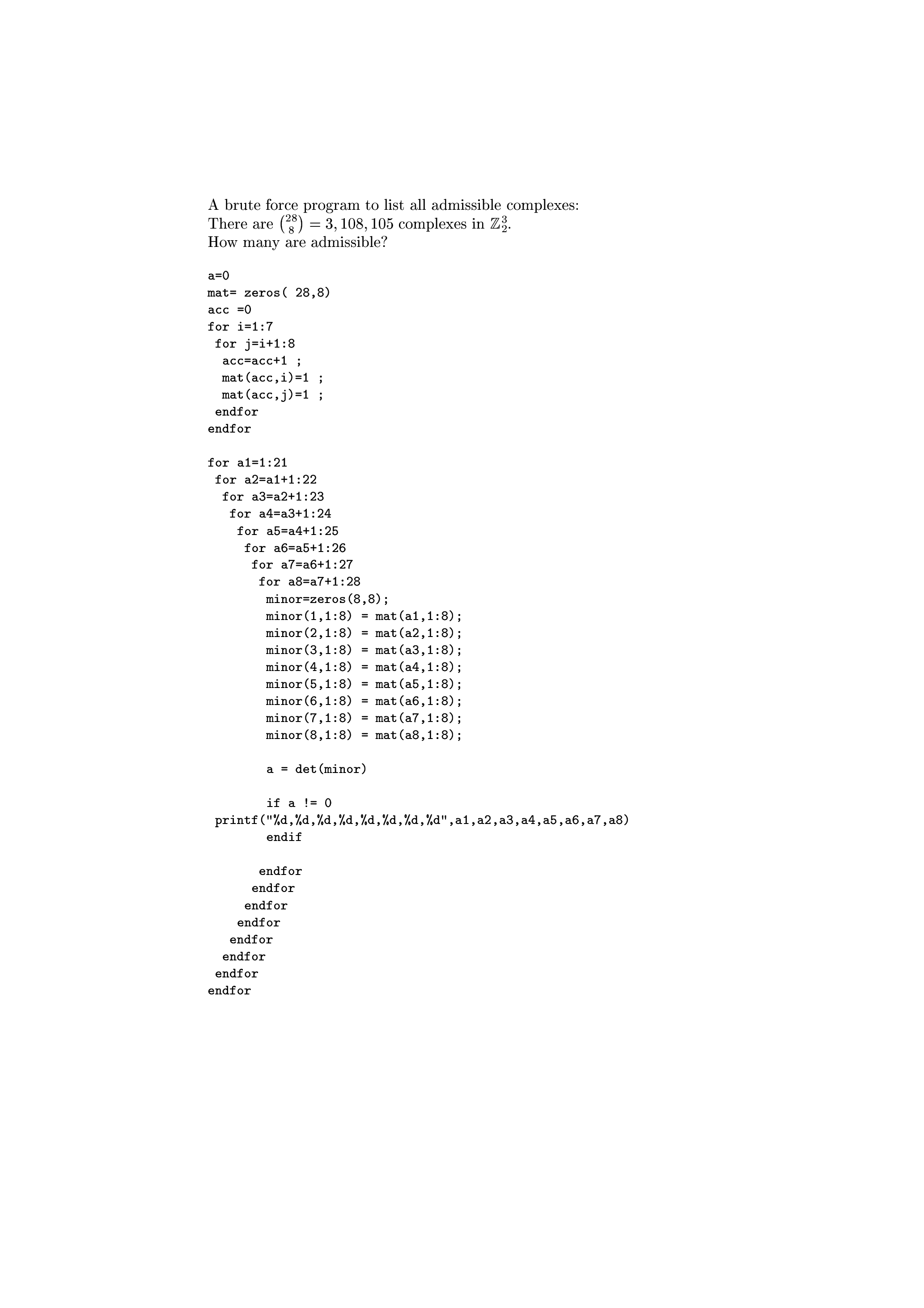}}
}
}
The program results give:
\begin{itemize}

\item 3,108,105 line complexes
\item 2,170,667 inadmissible complexes {\tt\tiny [later corrected to 2,170,665]}
\item 937,438 admissible complexes {\tt\tiny [later corrected to 937,440]}

\end{itemize}

Can we describe the {\it moduli space} of admissible complexes?
Can we enumerate them without using brute force?

There are some clear obstructions to admissibility. In particular, a complex $\C$ is inadmissible if it has any of the following features.

\begin{itemize}

\item An omitted point.
\item An isolated tree.
\item An even cycle.

\end{itemize}
Clearly, a line complex that does not pass through a particular point cannot recover data at that point. Similarly, complexes with even cycles or with isolated trees are rank-deficient, as manifested  by a $+1,-1$ data pattern. These contexts are illustrated below.

\begin{figure}[h]
\centerline{
\scalebox{0.25}
{
\includegraphics{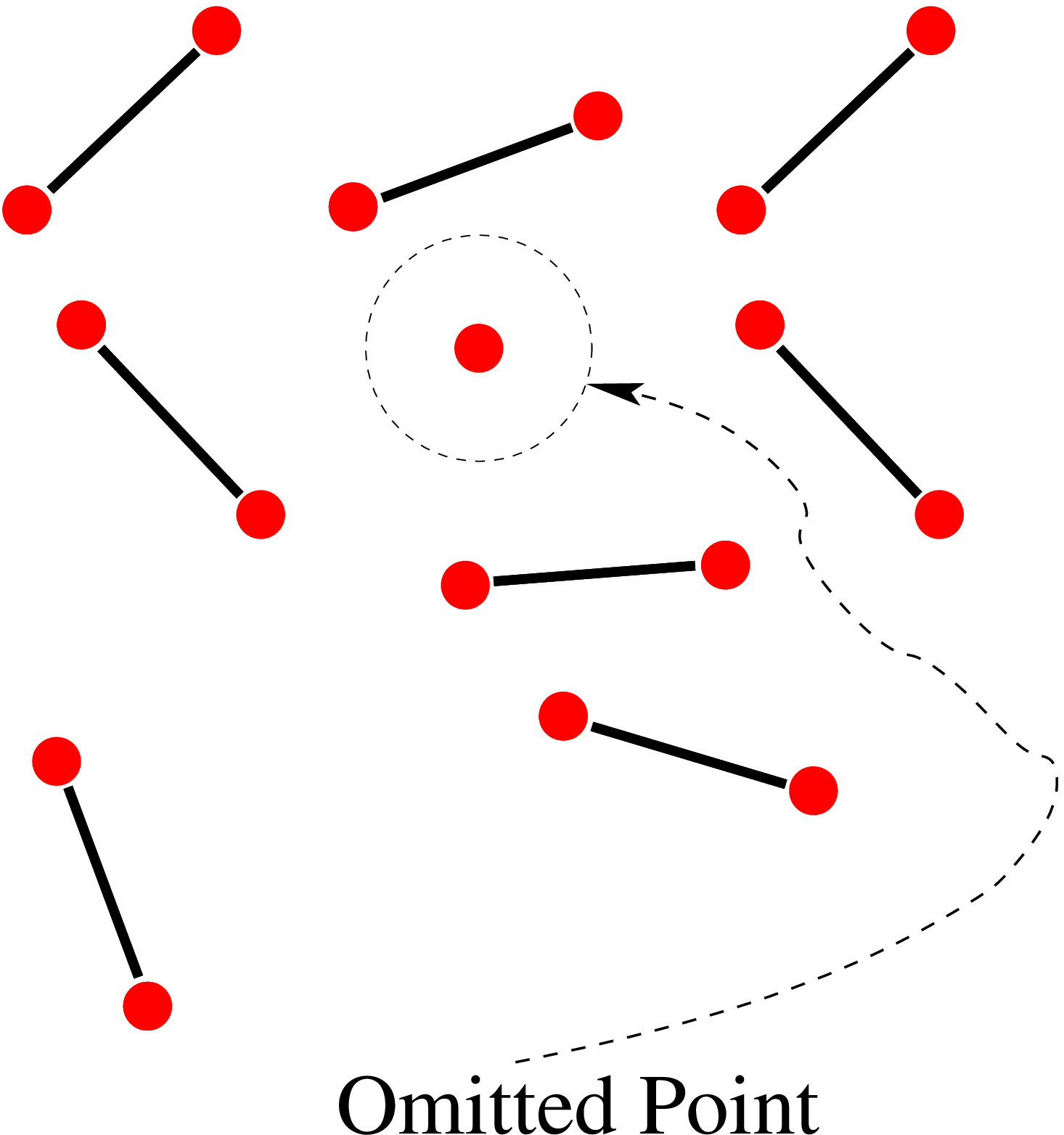} \qquad \qquad \qquad \qquad \qquad \includegraphics{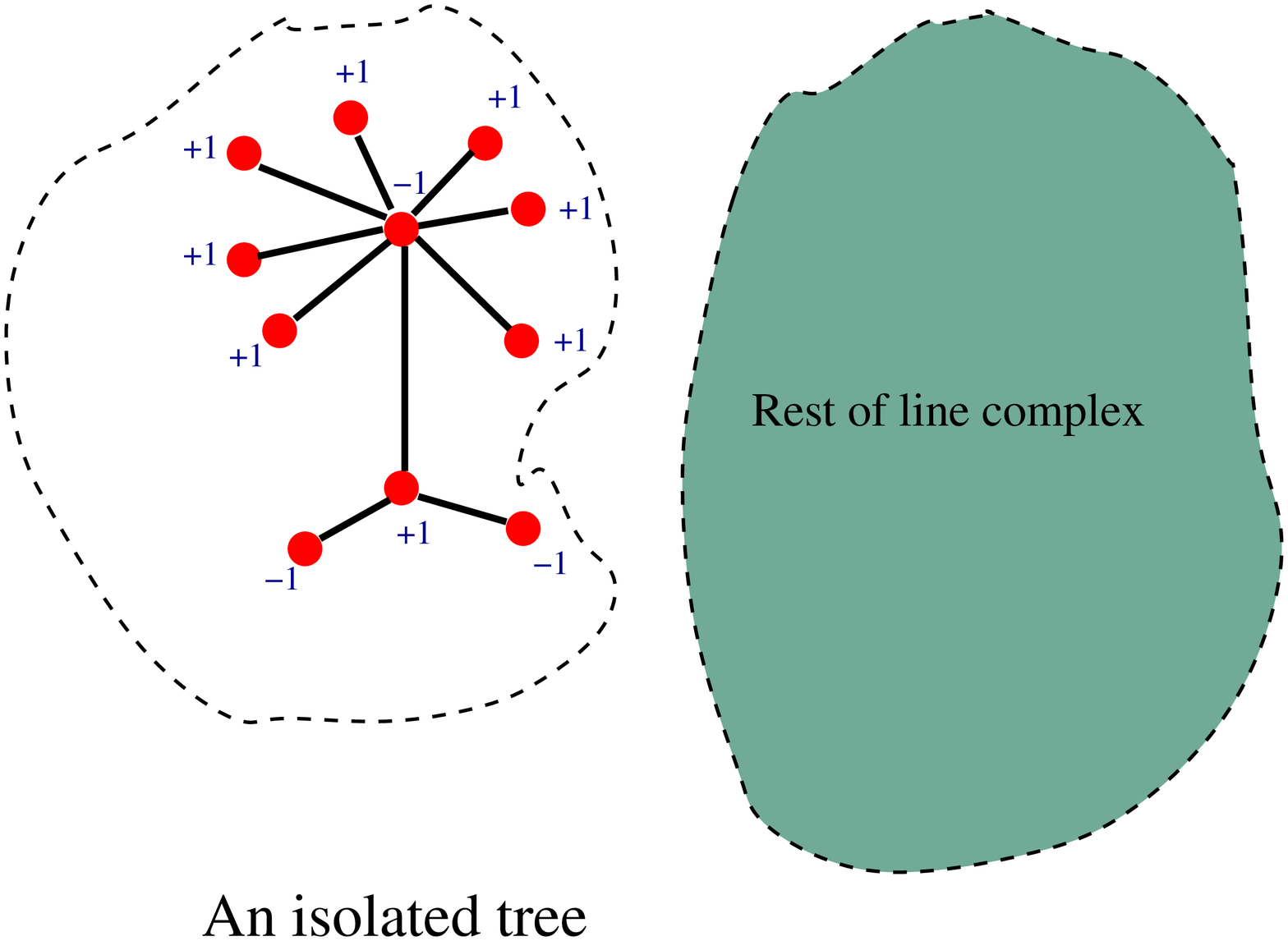} 
}}

\bigskip

{\scalebox{0.25}{
\includegraphics{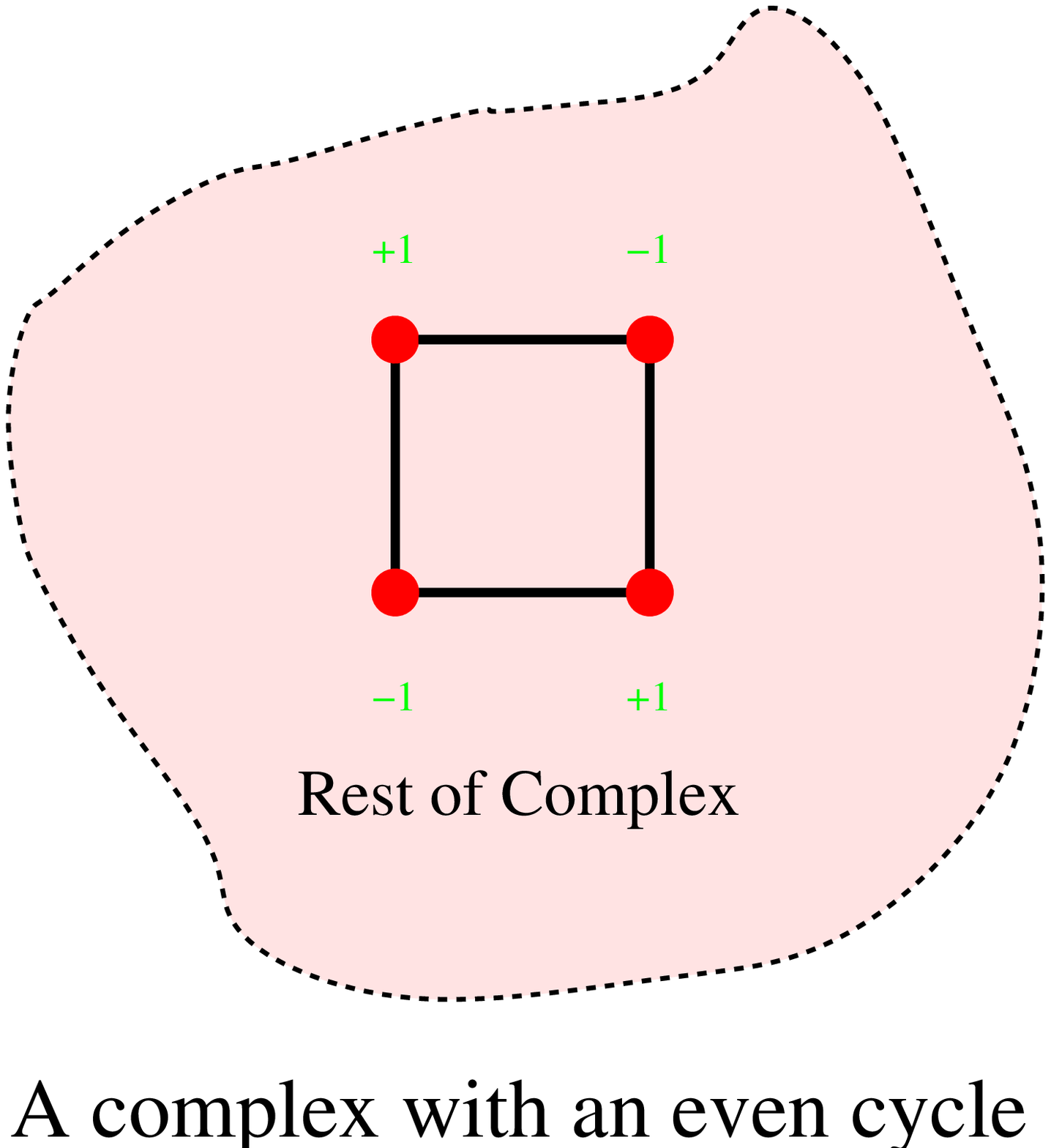}
}}
\caption{Some inadmissible configurations}
\end{figure}

\noindent
It turns out that these are {\it the only } obstructions to admissibility.

\begin{theorem} [Admissibility for $\mathbb Z_2^n$]
Let $\C$ be a line complex in $\mathbb Z_2^n$. Assume that $\C$ omits no point,
 has no isolated trees, and does not contain an even cycle. Then $\C$ is
 admissible.
\end{theorem}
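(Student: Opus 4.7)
The key observation is that every affine line in $\mathbb Z_2^n$ contains exactly two points, so a line complex $\C$ is literally a graph $G=(\mathbb Z_2^n,\C)$, with each line serving as an edge, and the Radon transform acts as $R_\C f(\{u,v\}) = f(u)+f(v)$. Since $|\C|=2^n=|\mathbb Z_2^n|$, the operator $R_\C$ is represented by a square matrix, so admissibility is equivalent to injectivity. The plan is therefore to show that under the three stated hypotheses, $\ker R_\C=0$.

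To analyze the kernel, I would work one connected component of $G$ at a time. On each component, a kernel element $f$ must satisfy $f(v)=-f(u)$ along every edge, so $f$ is determined on the component by its value at any single vertex $v_0$, provided that the sign-propagation is globally consistent. Traversing a closed walk of length $\ell$ from $v_0$ back to itself yields the constraint $f(v_0) = (-1)^\ell f(v_0)$, so consistency holds iff every cycle in the component has even length, i.e.\ iff the component is bipartite. In particular, a single odd cycle forces $f(v_0)=-f(v_0)$, whence $f(v_0)=0$ and by connectedness $f\equiv 0$ on that entire component.

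Next I would apply the hypotheses in sequence. \emph{Omits no point} rules out isolated vertices, which would otherwise contribute unconstrained coordinates to the kernel. \emph{No isolated trees} guarantees that every connected component of $G$ contains at least one cycle. \emph{No even cycles} forces every such cycle to be odd. Combining these, every component carries an odd cycle, so the kernel analysis gives $f\equiv 0$ on each component and hence $f\equiv 0$ on $\mathbb Z_2^n$, proving injectivity of $R_\C$ and thus admissibility.

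There is no serious technical obstacle: the two-point nature of $\mathbb Z_2$-lines collapses the admissibility problem into a bipartiteness question for the graph $G$, and the three listed obstructions are precisely the ways a graph on $2^n$ vertices can fail to have every component be both non-trivial and non-bipartite. The only mild point of care is to phrase \emph{isolated tree} broadly enough to subsume the edge-less case (or else to note that the edge-less case is already covered by \emph{omits no point}); once that is settled, the proof is a direct unwinding of the definition of $\ker R_\C$ using the standard characterization of bipartiteness via odd cycles.
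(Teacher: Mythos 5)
Your proposal is correct and follows essentially the same route as the paper's own proof: view the lines as edges of a graph on $\mathbb Z_2^n$, pass to connected components, use the no-isolated-tree and no-even-cycle hypotheses to find an odd cycle in each component (the paper's ``self-inverting'' cycles are exactly your $f(v_0)=-f(v_0)$ argument), and propagate along paths to conclude injectivity. Your write-up merely spells out the kernel/bipartiteness phrasing and the edge-less-component caveat that the paper leaves implicit.
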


\begin{proof}
Take a point $p \in {\mathbb Z}_2^n$. There's a line $\ell \in \C$
 containing $P$. Expand $\ell$ to a maximal connected set of lines, $\M$.  Then
 $\M$ cannot be a tree, so $\M$ contains cycles, hence odd cycles. Each odd cycle
 is ``self inverting". Every point in $\M$ is linked to an odd cycle by a
 contiguous path of lines, hence is solvable.  
\end{proof}

\newpage

\section{Appendix: Counting a majority of inadmissible complexes}
Here we will count two basic archetypes of inadmissible complexes, along with their intersection. This will serve to illustrate the combinatorics of the complete count. 

\subsection{Complexes that omit one or more points.}

First we enumerate complexes that are ``missing points'', that is, complexes $\Cal C$ so that there exist points $p \in \mathbb F_2^3$ so that no line $\ell \in \Cal C$ passes through $p$.
It turns out that there are many of these. There are seven lines through $p$, so the complexes that miss $p$ have $8$ lines chosen from the $28-7=21$. Now $\binom{21}{8}=203,440$. Multiplying this by the number of points, $8$, and accounting for  double counting (because there are complexes that omit more than one point) we obtain:

\begin{lemma} There are $\binom{21}{8} \times 8 = 1,627,920$ complexes that omit points. Here, each complex is counted with multiplicity equal to the number of points in $\mathbb F_2^3$ which it misses. \end{lemma}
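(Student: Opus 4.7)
The plan is to carry out a direct double-counting argument. Define an indicator set
\[
S = \{(p,\Cal C) : p \in \mathbb F_2^3,\ \Cal C \text{ is a complex},\ \text{no line of } \Cal C \text{ passes through } p\},
\]
and compute $\#S$ in two ways. Computing $\#S$ by first fixing $p$ gives the count $\binom{21}{8}\cdot 8$ that appears on the right-hand side; computing $\#S$ by first fixing $\Cal C$ yields the weighted count on the left-hand side. So the lemma is just Fubini for finite sets.

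For the first count, I would fix a point $p \in \mathbb F_2^3$. As already noted in the excerpt, there are $7$ lines through any given point of $\mathbb Z_2^3$, so the set $L_p$ of lines \emph{not} containing $p$ has $28-7 = 21$ elements. A complex $\Cal C$ misses $p$ precisely when every one of its $8$ lines is drawn from $L_p$, so the number of such complexes equals $\binom{21}{8} = 203{,}440$. Summing over the $8$ choices of $p$ gives $\#S = 8 \binom{21}{8} = 1{,}627{,}920$.

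For the second count, fix a complex $\Cal C$ and observe that the number of pairs $(p,\Cal C) \in S$ with this second coordinate is exactly the number of points of $\mathbb F_2^3$ missed by $\Cal C$. Hence
\[
\#S \;=\; \sum_{\Cal C} \#\{p \in \mathbb F_2^3 : \Cal C \text{ misses } p\},
\]
which is precisely the weighted count described in the statement. Equating the two expressions for $\#S$ completes the proof.

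There is no real obstacle: the key point is just to be careful about the claim. The lemma does \emph{not} assert that $1{,}627{,}920$ distinct complexes omit a point — that would require inclusion-exclusion over subsets of points to remove overcounts — but rather the weighted total. This is exactly what makes the proof a one-line application of double counting, and it is the natural quantity to work with when later combining archetypes via inclusion-exclusion.
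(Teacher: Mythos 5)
Your double-counting argument is correct and is essentially the paper's own proof: fix a point $p$, note that the $8$ lines of a complex missing $p$ must come from the $28-7=21$ lines avoiding $p$, multiply by the $8$ choices of $p$, and interpret the result as a count with multiplicity — you merely formalize this as Fubini over the incidence set. One minor numerical slip (inherited from the paper's own text): $\binom{21}{8}=203{,}490$, not $203{,}440$; happily $8\times 203{,}490 = 1{,}627{,}920$, so the stated total in the lemma is the correct one.
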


\subsubsection{Complexes that omit two or more points.}

How many complexes miss two points? There are $7+7-1=13$ lines through one or the other or both points. So a complex that misses both points has $8$ lines chosen from among $28-13=15$ lines. There are $28$ pairs of points, so we have double counted $28 \times \binom{15}{8}=28 \times 6,435
= 180,180$ complexes. (Note that we have double counted the double counting, because there are complexes that miss three points.)

\begin{lemma} The number of complexes that omit a pair of points is  
$28 \times \binom{15}{8}=28 \times 6,435 = 180,180.$ Here each complex is counted with multiplicity equal to the number of pairs of points that it misses.
\end{lemma}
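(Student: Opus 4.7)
The plan is a direct inclusion–exclusion count. First I would fix an unordered pair of distinct points $\{p_1,p_2\} \subset \mathbb F_2^3$ and count the lines of $\mathbb F_2^3$ that avoid both $p_1$ and $p_2$. Since each affine line over $\mathbb F_2$ has exactly $2=q$ points, two distinct points determine a unique line, and every point lies on exactly $7$ lines (one for each other point). Thus the number of lines incident to $p_1$ or $p_2$ is $7+7-1=13$, so the number of lines incident to neither is $28-13=15$.

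Next, a complex consists of $8$ lines, so the complexes that simultaneously miss both $p_1$ and $p_2$ are in bijection with $8$-subsets of this $15$-element set of lines, giving $\binom{15}{8}=6{,}435$ such complexes per pair.

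Finally I would sum over all pairs $\{p_1,p_2\}$. The number of such pairs is $\binom{8}{2}=28$, so the total, counted with multiplicity equal to the number of omitted pairs per complex, is
\[
28 \times \binom{15}{8} \;=\; 28 \times 6{,}435 \;=\; 180{,}180,
\]
as claimed. The only subtlety to flag is the word \emph{multiplicity}: a single complex that omits $k\geq 2$ points is counted once for each of its $\binom{k}{2}$ missed pairs. This is not an obstacle, but it is the point that must be made explicit, since the result is an inclusion–exclusion intermediate rather than a distinct-complex count, and it is exactly what will be needed later when reconciling with the previous lemma's count of complexes missing at least one point.
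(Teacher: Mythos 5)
Your proposal is correct and follows the paper's own argument essentially verbatim: count the $7+7-1=13$ lines meeting a fixed pair of points, choose the $8$ lines of the complex from the remaining $15$, and multiply by the $\binom{8}{2}=28$ pairs, with the same caveat about multiplicity for complexes missing more than two points. Nothing further is needed.
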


\subsubsection{Complexes that omit three or more points.}

How many lines pass through one or more of three given 
points? All but the $10$ that form the complete graph on the remaining $5$ points. Thus, to exhibit all complexes omitting three or more points, choose three points from $8$ and then choose $8$ lines from among $10$. Thus we have: 

\begin{lemma} The number of complexes that omit precisely three points is
$\binom{10}{8} \times \binom{8}{3} = 2,520.$ There are no line complexes that miss four or more points.
\end{lemma}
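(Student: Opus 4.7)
The plan is straightforward combinatorial bookkeeping, but with one subtlety: unlike the earlier two lemmas (which counted with multiplicity), this lemma asserts \emph{precisely} three points omitted, so I must verify that the displayed product $\binom{10}{8}\binom{8}{3}$ involves no overcount and cover the ``no four or more'' claim.

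First I would record the structural fact that in $\mathbb{Z}_2^3$ every line has exactly $|\mathbb{F}_2|=2$ points, so the line-point incidence structure coincides with the complete graph $K_8$: lines are unordered pairs of points, and the $28$ lines correspond to the $\binom{8}{2}$ edges of $K_8$. Then for any choice of three points $\{p_1,p_2,p_3\} \subset \mathbb{Z}_2^3$, the lines that avoid all three are precisely the edges of the $K_5$ on the remaining five points, of which there are $\binom{5}{2}=10$. Selecting any $8$ of these $10$ lines gives a line complex (recall $\#\mathcal{C}=8$) missing at least those three points, yielding $\binom{8}{3}\binom{10}{8}$ candidate (point-triple, complex) pairs.

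Next I would argue that this count is in fact the number of complexes missing \emph{exactly} three points, with multiplicity one. Removing two edges from $K_5$ lowers the degree of any vertex by at most $2$, so each of the five remaining points still lies on at least $4-2=2$ of the chosen lines. In particular all five remaining points are covered, so the complex misses exactly the three chosen points and the triple is recovered uniquely from the complex. This simultaneously shows there is no overcounting and confirms the product $\binom{10}{8}\binom{8}{3}=45\cdot 56=2520$ counts complexes omitting precisely three points.

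Finally, for the second claim, if a complex omitted $k\ge 4$ points, its eight lines would all lie on the at most $8-k\le 4$ remaining points, but these support only $\binom{8-k}{2}\le \binom{4}{2}=6<8$ lines, a contradiction. The ``hard part,'' insofar as there is one, is the multiplicity verification in the previous paragraph — everything else is direct counting using that lines in $\mathbb{Z}_2^3$ are edges of $K_8$.
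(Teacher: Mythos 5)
Your proof is correct and follows essentially the same counting as the paper: lines in $\mathbb{Z}_2^3$ are edges of $K_8$, so a complex missing three chosen points consists of $8$ of the $\binom{5}{2}=10$ edges on the remaining five points, giving $\binom{10}{8}\binom{8}{3}=2{,}520$. Your two added verifications --- that deleting two edges of $K_5$ cannot isolate a vertex (so the count is multiplicity-free and the three omitted points are exactly three), and that $\binom{8-k}{2}\le 6<8$ rules out $k\ge 4$ omitted points --- are details the paper leaves implicit, and they are exactly the right ones.
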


Putting the above lemmas together we have

\begin{lemma}
The number of complexes that avoid one or more points is:
$1,627,920 - 180,180 + 2,520 = 1,450,260.$ This count is without multiplicity.
\end{lemma}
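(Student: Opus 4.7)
The plan is to recognize this as a straightforward inclusion–exclusion over the events ``complex $\Cal C$ omits point $p$.'' For each $p \in \mathbb F_2^3$, let $A_p$ denote the collection of $8$-line complexes that contain no line through $p$. We want $|\bigcup_{p} A_p|$, i.e.\ the count of complexes avoiding one or more points, and the previous three lemmas have been phrased precisely so as to deliver the terms of inclusion–exclusion.

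First I would reinterpret the earlier lemmas. The first lemma's multiplicity convention (``each complex counted with multiplicity equal to the number of points it misses'') says exactly that $\sum_{p} |A_p| = 1{,}627{,}920$, giving the first symmetric sum $S_1$. The second lemma, counted with multiplicity equal to the number of missed pairs, gives $\sum_{\{p,q\}} |A_p \cap A_q| = 180{,}180 = S_2$. For the third lemma, the clause ``there are no line complexes that miss four or more points'' is crucial: it means that any complex in a triple intersection $A_p \cap A_q \cap A_r$ misses exactly the three points $\{p,q,r\}$ (and no others), so $\sum_{\{p,q,r\}} |A_p \cap A_q \cap A_r| = 2{,}520 = S_3$, with no further multiplicity correction. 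It also forces all higher symmetric sums $S_k$ with $k \ge 4$ to vanish.

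Then I would invoke the standard inclusion–exclusion identity
\begin{equation*}
\Bigl| \bigcup_{p \in \mathbb F_2^3} A_p \Bigr|
= \sum_{k \ge 1} (-1)^{k-1} S_k
= S_1 - S_2 + S_3,
\end{equation*}
where the tail truncates because $S_k = 0$ for $k \ge 4$. Substituting the values assembled above yields
\begin{equation*}
\Bigl| \bigcup_{p} A_p \Bigr|
= 1{,}627{,}920 - 180{,}180 + 2{,}520 = 1{,}450{,}260,
\end{equation*}
which is the count without multiplicity, as claimed.

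There is no real obstacle here beyond bookkeeping: the only subtle step is confirming that the third lemma's ``precisely three'' phrasing coincides with the raw inclusion–exclusion sum $S_3$, which it does precisely because no complex can avoid four or more points (seven lines are forced through each of any four points, but only $28-\binom{4}{2}\cdot\text{(something)}$ lines remain\,---\,more directly, the four points already use up more than $28-8$ lines, leaving too few to form a complex of size $8$). Once this is noted, the arithmetic is immediate.
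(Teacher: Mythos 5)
Your proposal is correct and matches the paper's argument, which likewise just assembles the three preceding lemmas by inclusion--exclusion (the paper states this as ``putting the above lemmas together'' without further elaboration). Your only additions are to spell out the symmetric sums $S_1, S_2, S_3$ explicitly and to justify that the ``precisely three'' count equals $S_3$ because no complex can omit four points (only $\binom{4}{2}=6$ lines would remain, fewer than the required $8$), which is exactly the bookkeeping the paper leaves implicit.
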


\subsection{Complexes with isolated lines.}

\subsubsection{Complexes with one or more isolated lines.}

Another type of non-admissible complex is one where a single line $\ell$ is `isolated', i.e., meets no other line in the complex. (This is the simplest case of an isolated tree.)
How many of these are there? Well, how many lines meet $\ell$? $7+7-1=13=28-15$. So the number of complexes having $\ell$ as an isolated line is $\binom{15}{7}= 6,435.
$ Accounting for each of $28$ lines, with the usual double counting reminder, we have 

\begin{lemma} There are
$6,435 \times 28= 180,180 $ complexes with one or more isolated lines. 
Each complex is counted with multiplicity equal to the number of isolated lines it has.
\end{lemma}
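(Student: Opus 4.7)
The plan is to enumerate pointed pairs $(\C, \ell)$ with $\ell \in \C$ and $\ell$ isolated in $\C$, since each complex then appears once per isolated line (matching the multiplicity assertion). I will fix $\ell$ first, count the complexes in which it is isolated, and then sum over the $28$ choices of $\ell$.

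First I would count lines meeting a given line $\ell \subset \mathbb{Z}_2^3$. Because every line in $\mathbb{Z}_2^3$ consists of exactly two points, write $\ell = \{p,q\}$. There are $7$ lines through $p$ and $7$ lines through $q$; these two families intersect only in $\ell$ itself, because any line through both $p$ and $q$ must equal $\ell$ (two points determine a unique line). Thus exactly $7+7-1 = 13$ lines meet $\ell$, and the remaining $28-13=15$ lines are disjoint from $\ell$.

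Next, for $\ell$ to be isolated in an $8$-line complex $\C$, we need $\ell \in \C$ and the other $7$ lines of $\C$ to be drawn from the $15$ lines disjoint from $\ell$. This yields $\binom{15}{7} = 6{,}435$ complexes in which $\ell$ is isolated. Summing over all $28$ choices of $\ell$ gives $28 \cdot 6{,}435 = 180{,}180$, with each complex $\C$ counted precisely as many times as it has isolated lines, which is the claimed formula.

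The only subtlety — and it is a mild one — is the justification that the families of lines through $p$ and through $q$ overlap only in $\ell$; this is exactly the content of ``two points determine a unique line'' in the affine geometry of $\mathbb{Z}_2^3$, and is the same overcounting principle (inclusion--exclusion at the trivial first step) already used in the preceding lemmas about omitted points. No further obstacle arises, so the argument is purely combinatorial.
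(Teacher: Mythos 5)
Your proposal is correct and follows essentially the same route as the paper: fix a line $\ell$, observe that $7+7-1=13$ lines meet it so $15$ are disjoint from it, count $\binom{15}{7}=6{,}435$ complexes in which $\ell$ is isolated, and multiply by the $28$ choices of $\ell$, with the multiplicity interpretation accounting for complexes having several isolated lines. Your added justification that the pencils through the two points of $\ell$ share only $\ell$ itself is a nice explicit touch but is the same counting the paper performs implicitly.
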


\subsubsection{Complexes with two or more disjoint isolated lines.}

If $\ell$ is a line, there are $13$ lines meeting $\ell$ and $15$ lines disjoint from $l$. Thus there are $(28)(15)/2=210$ pairs of disjoint lines. Given a complex with a pair of disjoint lines, the other $6$ lines of the complex must form the complete graph on the remaining four points. Thus there are $210$ complexes with precisely two disjoint isolated lines. Clearly a complex cannot have three disjoint isolated lines.

\begin{lemma} 
There are $(28)(15)/2=210$ complexes with precisely two isolated lines, and there are no complexes with three or more isolated lines.
\end{lemma}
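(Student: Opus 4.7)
The plan is to prove both assertions by a direct bijective count. The starting point is the earlier observation that each line $\ell$ in $\mathbb F_2^3$ meets exactly $12$ other lines (the $6+6$ other lines through each of the two points of $\ell$, with no overlap since the unique line through both points is $\ell$ itself), so $28-13=15$ lines are disjoint from $\ell$. Hence the number of unordered pairs of pairwise disjoint lines is $(28 \cdot 15)/2 = 210$.

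Next, given such a pair $\{\ell_1,\ell_2\}$, I would show that there is exactly one $8$-line complex $\Cal C$ containing them in which both $\ell_1$ and $\ell_2$ are isolated. The four points of $\ell_1 \cup \ell_2$ are distinct, leaving four other points in $\mathbb F_2^3$. A line is disjoint from both $\ell_1$ and $\ell_2$ iff its two endpoints both lie among these four remaining points, and there are precisely $\binom{4}{2}=6$ such lines, forming the complete graph $K_4$ on the four leftover vertices. Since $\Cal C$ needs $8-2=6$ further lines and each must avoid $\ell_1 \cup \ell_2$ to keep both isolated, all six of these $K_4$-lines are forced in, determining $\Cal C$ uniquely.

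I would then verify that this unique completion has \emph{precisely} two isolated lines. Every edge of $K_4$ meets three others of $K_4$, so none of the six added lines is isolated, and only $\ell_1,\ell_2$ are. Conversely, any complex with precisely two isolated lines must have those two lines disjoint (otherwise they would meet each other), and by the uniqueness above, it is the completion of that pair. This establishes a bijection between disjoint pairs and complexes with precisely two isolated lines, yielding the count $210$.

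Finally I would rule out complexes with three or more isolated lines. Three pairwise disjoint lines use six of the eight points; a line avoiding all three must use only the remaining two points, but there is only one such line. So an $8$-line complex containing three pairwise disjoint isolated lines would need $5$ further lines disjoint from all three, with at most $1$ available — a contradiction. The only subtle step is confirming that the six $K_4$-lines are actually \emph{forced}, which uses the requirement that \emph{both} $\ell_1$ and $\ell_2$ remain isolated; everything else is bookkeeping.
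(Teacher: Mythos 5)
Your argument is correct and follows essentially the same route as the paper: count the $(28)(15)/2=210$ disjoint pairs, observe that the remaining six lines are forced to be the complete graph on the four leftover points, and note that three pairwise disjoint isolated lines would leave too few available lines. One tiny slip: each edge of $K_4$ meets four (not three) of the other five edges, but this does not affect your conclusion that none of the six added lines is isolated.
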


\begin{lemma}
There are $180,180-210=179,970$ complexes with one or more isolated lines. These complexes are counted without multiplicity.
\end{lemma}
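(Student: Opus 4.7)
The plan is to obtain the unmultiplicity count by a direct inclusion--exclusion applied to the preceding two lemmas. Let $N_k$ denote the number of complexes having precisely $k$ isolated lines. The previous lemma established that $N_k = 0$ for $k \geq 3$, and that $N_2 = 210$. The lemma before it gave the total count of pairs $(\C, \ell)$ where $\C$ is a complex and $\ell \in \C$ is isolated in $\C$, namely $180{,}180$.

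Next I would express that weighted total in terms of the $N_k$'s. By definition, each complex with exactly $k$ isolated lines contributes $k$ pairs to the weighted count, so
\[
180{,}180 \;=\; \sum_{k \geq 1} k\, N_k \;=\; N_1 + 2 N_2,
\]
using $N_k = 0$ for $k \geq 3$. Solving yields $N_1 = 180{,}180 - 2 \cdot 210 = 179{,}760$. The quantity we actually want is the number of complexes with at least one isolated line, counted without multiplicity, which is
\[
N_1 + N_2 \;=\; (180{,}180 - 2 N_2) + N_2 \;=\; 180{,}180 - N_2 \;=\; 180{,}180 - 210 \;=\; 179{,}970.
\]

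There is essentially no obstacle here; the content is entirely bookkeeping, and the only thing one needs to take care of is that the sum truncates at $k = 2$, which is guaranteed by the previous lemma (and indeed by a quick geometric sanity check: three mutually disjoint lines in $\mathbb Z_2^3$ would account for $6$ of the $8$ points, and the remaining $2$ points determine a unique further line, so a third isolated line would then be forced into the complex, contradicting isolation once the remaining five lines of a complex are chosen).
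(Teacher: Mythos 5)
Your proof is correct and follows essentially the same route as the paper, which simply combines the two preceding lemmas by inclusion--exclusion: the multiplicity count $180{,}180 = N_1 + 2N_2$ together with $N_2 = 210$ and $N_k = 0$ for $k \ge 3$ gives $N_1 + N_2 = 180{,}180 - 210 = 179{,}970$. Your parenthetical ``sanity check'' about three disjoint lines is worded confusingly (the correct point is that the remaining five lines of such a complex would all have to lie on the two leftover points, which support only one line), but it is inessential since you already invoke the previous lemma for $N_{\ge 3} = 0$.
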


\subsection{Complexes with both omitted points and isolated lines.}
\subsubsection{Complexes with one or more isolated lines and one or more omitted points.}

There are five points disjoint from the designated omitted point and the isolated line, hence there are $\binom{5}{2}=10$ permissibile lines. We must choose $7$ lines among these to form a complex, and there are $8 \times 28$ point, line pairs.

\begin{lemma} There are no  complexes with one isolated line and two omitted points. 
\end{lemma}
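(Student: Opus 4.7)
The plan is a direct counting argument in the style of the preceding lemmas in this appendix. Suppose for contradiction that $\Cal C$ is an $8$-line complex in $\mathbb Z_2^3$ containing an isolated line $\ell$ and omitting two distinct points $p_1, p_2$. I want to show that there are simply not enough usable lines to fill out such a complex.

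First I would observe that neither $p_1$ nor $p_2$ may lie on $\ell$: if, say, $p_1 \in \ell$, then $\ell \in \Cal C$ passes through $p_1$, contradicting the omission of $p_1$. Hence both omitted points lie in the $6$-point complement $\mathbb Z_2^3 \setminus \ell$. Next, the seven lines of $\Cal C$ other than $\ell$ must simultaneously (a) be disjoint from $\ell$, because $\ell$ is isolated in $\Cal C$, and (b) avoid $\{p_1,p_2\}$, because these points are omitted.

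Now the counting. Since every pair of distinct points of $\mathbb Z_2^3$ spans a unique line, lines satisfying both (a) and (b) are in bijection with $2$-element subsets of $\mathbb Z_2^3 \setminus (\ell \cup \{p_1,p_2\})$, a set of $8-2-2=4$ points. Thus at most $\binom{4}{2}=6$ lines are available, but $7$ are needed, yielding the contradiction. As a sanity check this dovetails with the previous case: with only one omitted point $p_1$ off $\ell$, the pool was $\binom{5}{2}=10$ lines, comfortably above the threshold of $7$; dropping $p_2$ from $\mathbb Z_2^3 \setminus \ell$ removes $\binom{5}{2}-\binom{4}{2}=4$ lines and pushes the pool below threshold.

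I do not expect any real obstacle, as the argument is a single counting step. The only subtle point is the preliminary observation that the omitted points must lie off $\ell$, which is immediate from the definitions but is what makes the set difference $\mathbb Z_2^3 \setminus (\ell \cup \{p_1,p_2\})$ genuinely a $4$-point set rather than something larger.
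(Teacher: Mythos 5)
Your argument is correct and is essentially the paper's own proof: both reduce to the observation that the complement of the isolated line together with the two omitted points leaves only $4$ points, hence only $\binom{4}{2}=6$ candidate lines, fewer than the $7$ additional lines a complex would require. Your preliminary remark that the omitted points must lie off $\ell$ is a worthwhile explicit touch, but the route is the same.
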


\begin{proof}
The complement of the union of the omitted points and the isolated line has $4$ points, and these form $6$ lines, not enough to form a line complex. 
\end{proof}

\begin{lemma}
There are no complexes with two disjoint isolated lines and an omitted point
\end{lemma}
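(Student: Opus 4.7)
The plan is to show that the constraints are incompatible by a direct point-count. Suppose for contradiction that $\Cal C$ is a line complex containing two disjoint isolated lines $\ell_1, \ell_2$ and omitting a point $p$. Since each line in $\mathbb Z_2^3$ has exactly $2$ points, the union $\ell_1 \cup \ell_2$ contains $4$ points. The \emph{isolation} hypothesis says that no other line of $\Cal C$ meets $\ell_1 \cup \ell_2$, so the remaining $6$ lines of $\Cal C$ must all lie inside the $4$-point complement $S := \mathbb Z_2^3 \setminus (\ell_1 \cup \ell_2)$.

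Next I would note the key numerical coincidence: the complete graph on $4$ points has exactly $\binom{4}{2} = 6$ edges, and in $\mathbb Z_2^3$ every unordered pair of distinct points spans a unique line. Hence there are \emph{exactly} $6$ lines with both endpoints in $S$, which forces the remaining $6$ lines of $\Cal C$ to be all of them. In particular, every point of $S$ lies on $3$ lines of $\Cal C$ (the edges of the complete graph at that vertex).

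To finish, observe that the omitted point $p$ cannot lie on $\ell_1 \cup \ell_2$, since those two lines belong to $\Cal C$ and cover their $4$ points; therefore $p \in S$. But we just showed every point of $S$ is incident to $3$ lines of $\Cal C$, contradicting the hypothesis that $p$ is omitted. This contradiction proves the lemma.

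There is no real obstacle here; the only thing to be careful about is verifying that the $6$ lines among the $4$ points of $S$ are genuinely all available (i.e., that any two points of $\mathbb Z_2^3$ determine an affine line), which is immediate over the two-element field since the line through $x \neq y$ is simply $\{x, y\}$. The argument is essentially the same pigeonhole counting used in the preceding lemma on complexes with two disjoint isolated lines, just with one fewer degree of freedom.
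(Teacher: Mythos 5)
Your proof is correct. It takes a slightly different route from the paper's: the paper removes all five forbidden points at once (the four points of the two isolated lines plus the omitted point) and observes that the remaining three points support only $\binom{3}{2}=3$ lines, too few to supply the six further lines the complex needs. You instead use only the isolation hypothesis to force those six lines to be exactly the complete graph on the four-point complement $S$ (the same structural observation the paper makes when counting the $210$ complexes with two disjoint isolated lines), and then get a contradiction because that complete graph covers every point of $S$, while the omitted point must lie in $S$. Both arguments are sound one-step counts; the paper's is marginally more economical, while yours makes explicit the forced $K_4$ structure on the complement and ties the lemma directly to the preceding one, which is a reasonable trade.
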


\begin{proof}
There are five points in the union of the two lines and point, hence three points left, not enough to span a line complex. 
\end{proof}

\begin{lemma}
The number of complexes with one isolated line and one omitted point is 
$(8 \times 21) \binom{10}{7}=20,160$.  The count is multiplicity free.
\end{lemma}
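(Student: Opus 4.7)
The plan is to count triples $(p,\ell,\mathcal C')$, where $p$ is an omitted point, $\ell$ is an isolated line, and $\mathcal C'$ is the set of remaining $7$ lines in the complex, and then argue that the correspondence $(p,\ell,\mathcal C')\mapsto \mathcal C=\{\ell\}\cup\mathcal C'$ is a bijection onto the family we want to enumerate.

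First I would choose the omitted point $p$, in $8$ ways. Next I would choose the isolated line $\ell$. Since $p$ is omitted from $\mathcal C$ and $\ell\in\mathcal C$, the line $\ell$ cannot pass through $p$; of the $28$ lines, exactly $7$ pass through $p$, leaving $21$ admissible choices for $\ell$. This accounts for the factor $8\times 21$.

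Next I would count the ways to fill out the remaining $7$ lines. Each such line must (i) avoid $p$ (so that $p$ remains omitted) and (ii) be disjoint from $\ell$ (so that $\ell$ remains isolated). Since a line in $\mathbb Z_2^3$ has exactly two points, a line satisfies (i) and (ii) precisely when both of its points lie in the $5$-point set $\mathbb Z_2^3\setminus(\{p\}\cup\ell)$. The number of lines with both endpoints in this $5$-point set is $\binom{5}{2}=10$, and we must choose $7$ of them, giving $\binom{10}{7}$ possibilities. Multiplying gives $(8\times 21)\binom{10}{7}=20{,}160$.

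Finally, the main thing to verify is that the count is multiplicity free, i.e., that no complex is built this way from two distinct pairs $(p,\ell)$. For a given complex $\mathcal C$ in this family, the preceding two lemmas already show that $\mathcal C$ has at most one omitted point (otherwise we would be in the ``one isolated line and two omitted points'' case, which is empty) and at most one isolated line (two isolated lines in a common complex are automatically disjoint, since a line meeting another line of $\mathcal C$ fails to be isolated, and the ``two disjoint isolated lines and an omitted point'' case is likewise empty). Hence the pair $(p,\ell)$ is uniquely determined by $\mathcal C$, and the count $20{,}160$ is without multiplicity. The only step I expect to require care is this last uniqueness argument; once one observes that isolated lines in a common complex are forced to be disjoint, the empty cases from the preceding lemmas close the argument.
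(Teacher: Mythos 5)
Your proposal is correct and matches the paper's argument: both count the $8\times 21=168$ disjoint point--line pairs, fill in the remaining $7$ lines from the $\binom{5}{2}=10$ lines on the complementary five points, and invoke the two preceding emptiness lemmas to conclude the count is multiplicity free. Your added observation that two isolated lines in a common complex are automatically disjoint is a nice explicit justification of a step the paper leaves implicit, but the route is essentially the same.
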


\begin{proof}
There are $8 \times 21=168$ disjoint point-line pairs (or $28 \times 6=168$ disjoint line-point pairs). Given a disjoint point-line pair there are $5$ remaining points and $\binom{5}{2}=10$ lines in their complete graph. Of these we must choose $7$ to obtain  a line complex. Because of the preceeding lemmas there are no multiplicities. Hence the claimed count is verified. 
\end{proof}

\medskip
We have counted a majority of inadmissible complexes and illustrated the combinatorics of intersections of archetypes. If sufficient interest develops we will post a completion of this analysis. \medskip

\noindent
{\emph{Added in proof:} 

\noindent
\scalebox{0.8}{\tt  This analysis is now completed and included in a follow-up paper with Mehmet Orhon.}}

\section{acknowledgement}
The author thanks the referee of another version of the paper for helpful comments and suggestions.

\end{document}